\def\nn{\nonumber}
\def\a{\alpha}  \def\d{\delta} \def\D{\Delta}
\def\e{\varepsilon}    \def\g{\gamma}
\def\G{\Gamma}  
 \def\th{\theta}    
  \def\n{\nu} 
  \def\s{\sigma} 
 \def\om{\omega}
\def\bx{{\bf x}}
\newtheorem{theorem}{Theorem}
\newtheorem{lemma}[theorem]{Lemma}
\newtheorem{corollary}[theorem]{Corollary}
\newtheorem{Remark}{Remark}
\newcommand{\brac}[1]{\left(#1\right)}
\newcommand{\bfrac}[2]{\left(\frac{#1}{#2}\right)}
\newcommand{\set}[1]{\left\{#1\right\}}
\def\sm{\setminus}
\def\seq{\subseteq}
\def\E{\mathbb{E}}
\def\Pr{\mathbb{P}}
\newcommand{\ignore}[1]{}
\newcommand{\beq}[2]{\begin{equation}\label{#1}#2\end{equation}}
\newcommand{\mults}[1]{\begin{multline*}#1\end{multline*}}
\def\nn{\nonumber}
\def\N{\mathbb{N}}
\begin{document}
	
\author{Alan Frieze\thanks{Research supported in part by NSF grant DMS1952285} \ and Aditya Raut\\
	Department of Mathematical Sciences\\
	Carnegie Mellon University\\
	Pittsburgh PA-15213
	}

\date{}

\title{List chromatic number of the square of a sparse random graph}

\maketitle

\begin{abstract}
	We show that w.h.p the list chromatic number $\chi_\ell$ of the square of $G_{n,p}$ for $p=c/n$ is asymptotically equal to the maximum degree $\Delta(G_{n,p})$. Since $\chi(G^2_{n,p})\leq \chi_\ell(G^2_{n,p})$, this also improves an earlier result of Garapaty et al  \cite{KLMP} who proved that $\chi(G^2_{n,p}) \leq 6 \cdot \Delta(G_{n,p})$ w.h.p.
\end{abstract}

\section{Introduction}

The Erd\H{o}s-R\'enyi random graph $G_{n,p}$ for a positive integer $n$ and a real number $p\in [0,1]$ is defined as an $n$-vertex graph in which each pair of vertices $\{u,v\}$ is connected by an edge $uv$ with probability $p$, independently of all other pairs. Let $p=c/n$ where $c>0$ is a constant. The chromatic number of $G_{n,p}$ is well-understood, at least for sufficiently large $c$. {\L}uczak \cite{L} proved that if $G=G_{n,p}$ then $\chi(G)\sim \frac{c}{2\log c}$. This was refined by Achlioptas and Naor \cite{AN}, and further improved later by Coja-Oghlan and Vilenchik \cite{CV}. The list chromatic number $\chi_\ell$ is defined as the smallest number $k$ such that if each vertex is assigned a list of $k$ colors, there is always a valid proper coloring from those lists. Alon, Krivelevich and Sudakov \cite{AKS} showed that $\chi_\ell(G_{n,p}),p=c/n$ is $\Theta\bfrac{c}{2\log c}$. 

The square of a graph $G$ is obtained from $G$ by adding edges for all pairs of vertices at distance two from each other. Atkinson and Frieze \cite{AF} showed that w.h.p. the independence number of $G_2=G_{n,p}^2$ is asymptotically equal to $\frac{4n\log c}{c^2}$, for large $c$. Garapaty, Lokshtanov, Maji and Pothen \cite{KLMP} studied the chromatic number of powers of $G_{n,p}$. Let $\D=\D(G_{n,p})\sim\tfrac{\log n}{\log\log n}$ be the maximum degree in $G=G_{n,p}$ (for a proof of this known claim about the maximum degree, see for example \cite{FK}, Theorem 3.4). Garapaty et al proved, in the case of the square $G_2$ of $G_{n,p}$ with $p=c/n$, that $\chi(G_2) \leq 6\cdot \frac{\log n}{\log\log n}$ w.h.p. In this work, we strengthen this bound and prove a more general result about the list chromatic number. 

\begin{theorem}\label{th1}
	Let $p=c/n$ where $c>0$ is a constant. Let $G_2$ denote the square of $G_{n,p}$. Then w.h.p., $\chi(G_2)\sim \chi_\ell(G_2)\sim\D(G_{n,p})\sim\frac{\log n}{\log\log n}$.
\end{theorem}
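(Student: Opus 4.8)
The plan is to prove matching upper and lower bounds on $\chi(G_2)$. The lower bound is immediate: in any graph $H$, $\chi(H)\ge\D(H)/? $ is false in general, but here we have $\chi(G_2)\ge\D(G)+1$ trivially, because if $v$ is a vertex of maximum degree $\D$ in $G$, then $v$ together with its $\D$ neighbours forms a clique in $G_2$ (all neighbours of $v$ are at distance $2$ from each other in $G$, hence adjacent in $G_2$). Since $\D(G_{n,p})\sim\frac{\log n}{\log\log n}$ w.h.p.\ by the cited result, this gives $\chi(G_2)\ge(1+o(1))\frac{\log n}{\log\log n}$. All the work is in the upper bound: showing $G_2$ can be properly coloured with $(1+o(1))\D$ colours.

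The key structural fact about $G_{n,p}$ with $p=c/n$ is that w.h.p.\ the graph is \emph{locally sparse}: away from a few rare dense spots, neighbourhoods look like trees, and the high-degree vertices are scattered. I would first establish a decomposition of $V(G)$ w.h.p.: let $S$ be the set of ``dense'' vertices, e.g.\ vertices lying in a short cycle, or vertices of degree exceeding some threshold $d_0$ (with $d_0=o(\D)$, say $d_0=\log\log n$ or a large constant times $\log c$), together with vertices within distance $2$ of such. A first-moment/branching-process calculation shows $|S|=n^{o(1)}$ (or at least $S$ induces in $G$ only very small components), and moreover w.h.p.\ any two distinct vertices of degree $\ge d_0$ are at distance $>4$ in $G$ (so their $G$-neighbourhoods, and even their $G_2$-neighbourhoods restricted to $V\sm S$, are essentially disjoint and acyclic). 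This is the kind of estimate used in \cite{AF} and \cite{KLMP} and should go through with standard union bounds over small subgraphs.

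Next I would colour $G_2$ in stages. On the ``sparse part'' $V\sm S$, every vertex $v$ has $\deg_G(v)\le d_0$, so its degree in $G_2$ is at most $\sum_{u\sim v}\deg_G(u)$; since all neighbours $u$ of $v$ also lie outside $S$ (or are handled specially) we get $\deg_{G_2}(v)\le d_0^2+d_0=n^{o(1)}\ll\D$ — but that is far too weak on its own because the square can still have large degree near a high-degree vertex. The right way is: around each high-degree vertex $w$ (degree $\d_w$, with $d_0\le\d_w\le\D$), the set $N_G(w)$ forms a clique of size $\d_w$ in $G_2$ needing $\d_w$ colours, and these cliques are ``spread out'' (pairwise far apart, by the distance-$>4$ property), so the greedy / list-colouring obstruction is local. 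Concretely, I would use a theorem of the form: if a graph $H$ has the property that it is $\D(H)$-degenerate in a controlled way, or better, apply the result that $\chi(H)\le(1+o(1))\max(\omega(H),\text{something})$ under local sparsity — here the clean tool is to bound $\D(G_2)$ itself. In fact one expects $\D(G_2)\sim\D(G)$ as well w.h.p.: the vertex of maximum $G_2$-degree is (essentially) a maximum-$G$-degree vertex $w$ none of whose neighbours has degree more than $d_0$, giving $\deg_{G_2}(w)\le\D+\D\cdot d_0$ — oops, that is $(1+o(1))$ times too big only if $d_0=o(1)$, which is impossible. So $\D(G_2)$ is actually $\Theta(\D\log\log n)$ in the worst case, and a naive greedy colouring gives only $\chi(G_2)=O(\D\log\log n)$, matching \cite{KLMP} but not the theorem.

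The main obstacle — and the heart of the argument — is therefore to beat the greedy bound by exploiting that each high-degree vertex's clique neighbourhood overlaps only slightly with the ``extra'' $G_2$-edges. I would handle this via a two-colour-palette / reservation scheme: reserve a palette of $(1+o(1))\D$ colours; first properly colour $G[V\sm(\bigcup_w N_G[w])]$ and the low-degree structure using $o(\D)$ colours greedily (since that part has $G_2$-degree $n^{o(1)}$ after the high-degree neighbourhoods are deleted, or more carefully, $O(d_0^2)=o(\D)$); then for each high-degree vertex $w$, its neighbourhood $N_G(w)$ is a clique of size $\d_w\le\D$ in $G_2$, these cliques are vertex-disjoint and mutually non-adjacent in $G_2$ (again by the distance property — two vertices in $N_G(w)$ and $N_G(w')$ are at $G$-distance $\ge 3$ when $w\ne w'$ are far apart, hence non-adjacent in $G_2$), so each can be coloured \emph{independently} with a fresh set of $\d_w$ colours drawn from the common palette of size $(1+o(1))\D$, after deleting from each vertex's list the $o(\D)$ colours already used on its (few) low-degree $G_2$-neighbours; a list-colouring argument (greedy, since each such vertex has at most $\d_w-1+o(\D)=(1+o(1))\D$ forbidden colours out of $(1+o(1))\D$... which is \emph{not} enough) — so I would instead use that within the clique $N_G(w)$ the colouring is a proper colouring of $K_{\d_w}$ plus a bounded-degree ($o(\D)$) graph of cross-edges, which is $(\d_w+o(\D))$-list-colourable by a standard augmenting/Hall-type argument or by Kahn's / the Johansson-type bound. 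The clean statement to aim for: a graph that is the union of a clique $K_m$ and a graph of maximum degree $D$ on the same vertex set has chromatic number at most $m+D$, trivially, but here the $+D$ term is $o(\D)$ \emph{only if} $D=o(\D)$, i.e.\ $D=O(d_0^2)=o(\D)$ — which holds! So choosing $d_0=o(\sqrt{\D})$, e.g.\ $d_0=(\log\log n)^{1/3}$, every sparse-part vertex has $G_2$-degree $O(d_0^2)=o(\D)$, and every high-degree clique $N_G(w)$ meets only $O(d_0^2)=o(\D)$ cross-edges per vertex, so the whole of $G_2$ is $(\D+o(\D))$-colourable. One must still verify $\D(G_{n,p})$ is achieved only at vertices \emph{all} of whose neighbours have degree $\le d_0$ — but since w.h.p.\ no two vertices of degree $\ge d_0$ are adjacent (another short first-moment bound, as $\Pr[\exists\text{ adjacent pair each of degree}\ge d_0]\to0$ for $d_0\to\infty$), this is automatic, and the proof is complete.

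(In writing the final version I would organise this as: \textbf{Lemma 1} — the structural/distance properties of $G_{n,p}$ hold w.h.p.; \textbf{Lemma 2} — conditioned on those, $G_2$ decomposes as described; then the colouring argument; the delicate point to get right in Lemma 1 is the threshold $d_0$, which must simultaneously satisfy $d_0\to\infty$ (so that first-moment bounds on adjacent high-degree pairs and on short cycles through high-degree vertices are $o(1)$) and $d_0=o(\sqrt{\D})=o((\log n/\log\log n)^{1/2})$ (so that the $d_0^2$ slack is negligible), and both are easily met.)
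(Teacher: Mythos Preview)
Your lower bound is fine, and you have correctly identified that the naive greedy bound $\D(G_2)+1$ is too weak (indeed $\D(G_2)$ can be of order $\D\cdot\frac{\log\log n}{\log\log\log n}$, not $(1+o(1))\D$). But the core of your upper-bound strategy collapses at the choice of threshold $d_0$.

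You need $d_0$ to satisfy simultaneously (i) $d_0^2=o(\D)$, so that the ``low-degree'' part and the cross-edges contribute only $o(\D)$ to each vertex's list of forbidden colours, and (ii) w.h.p.\ no two vertices of degree $\ge d_0$ are adjacent (or even within distance $4$), so that the high-degree cliques are genuinely isolated. These two requirements are \emph{incompatible}. A short computation shows that in $G_{n,c/n}$ the number of vertices of degree at least $\a\D$ is $n^{1-\a+o(1)}$; hence the expected number of adjacent pairs each of degree $\ge\a\D$ is $n^{2(1-\a)}\cdot\frac{c}{n}=n^{1-2\a+o(1)}$, which tends to zero only when $\a>1/2$. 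So your first-moment claim ``$\Pr[\exists\text{ adjacent pair each of degree}\ge d_0]\to0$ for $d_0\to\infty$'' is simply false for any $d_0=o(\D)$; you would need $d_0>\D/2$, which destroys (i). With $d_0=(\log\log n)^{1/3}$ there are in fact $n^{1-o(1)}$ vertices of degree $\ge d_0$, and plenty of edges among them, so the high-degree cliques are neither few nor far apart.

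The paper gets around this obstacle by \emph{not} asking high-degree vertices to be isolated. Its threshold is $\e\D$ with $\e=\th^{1/2}\to 0$ very slowly, and the key structural fact (Lemma~\ref{lem2}) is that no small connected subgraph can carry vertices whose degrees \emph{sum} to more than $(1+\th^{1/3})\D$. From this one deduces (Corollaries~\ref{cor1}, \ref{cor2}) that every vertex has at most $(1+o(1))\D$ $G_2$-neighbours inside the high-degree set $V_\e$ and inside its closed neighbourhood $W_\e$; a greedy colouring of those two layers then succeeds with $(1+o(1))\D$ colours. The remaining vertices, all of degree $<\e\D$, are handled not by a crude degree bound but by an edge-density argument (Lemmas~\ref{lem3}--\ref{lem5}) showing every subset $S$ disjoint from $W_\e$ spans at most $O(\e\D)\,|S|$ edges of $G_2$, whence that part is $O(\e\D)$-degenerate and list-colourable with $o(\D)$ colours. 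The degree-sum lemma is the idea your proposal is missing.
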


We show that w.h.p. $G_2$ is $q$-list-colorable with $q = (1+3\th^{1/3})\D$ where $\th = o(1)$ is given in \eqref{DV}, establishing the upper bound. Note that for every graph $G$, $\chi(G)\leq \chi_\ell(G)$. Since the neighbors of a vertex in $G_{n,p}$ form a clique in the square graph $G_2$, the lower bound of $\D$ in the theorem is trivial. 

\begin{Remark}
	The value of $c$ does not contribute to the main term in the claim of Theorem \ref{th1}. Thus we would expect that we could replace $p=c/n$ by $p\leq \om/n$ for some slowly growing function $\om=\om(n)\to \infty$. Indeed, a careful examination of the proof below verifies this so long as $c=o(\log\log n)$.
\end{Remark}

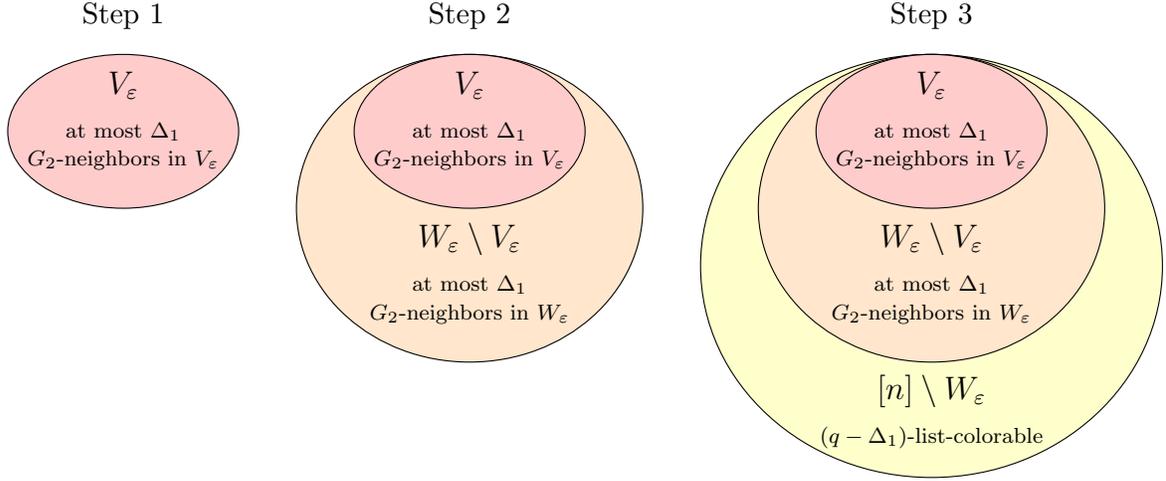
\begin{figure}[htbp]
	\centering
	
	\resizebox{0.8\textwidth}{!}
	{
		
		\begin{tikzpicture}[scale=0.5]			
			
			\draw[fill=yellow!20] (0,-3.5) ellipse (6 and 5.5);
			\draw[fill=orange!20] (0,-2) ellipse (4.5 and 4);
			\draw[fill=red!20]    (0, 0) ellipse (3 and 2);
			
			\draw[fill=orange!20] (-12,-2) ellipse (4.5 and 4);
			\draw[fill=red!20]    (-12, 0) ellipse (3 and 2);
			
			\draw[fill=red!20] (-21,0) ellipse (3 and 2);	
			
			\node at (0, 1.2) {$V_\varepsilon$};
			\node at (0,-2.8) {$W_\varepsilon \setminus V_\varepsilon$};
			\node at (0,-6.75) {$[n] \setminus W_\varepsilon$};
			
			\node at (-12,1.2) {$V_\varepsilon$};
			\node at (-12,-2.8) {$W_\varepsilon \setminus V_\varepsilon$};
			
			\node at (-21,1.2) {$V_\varepsilon$};
			
			\node at (0,     0) {\scriptsize at most $\Delta_1$};
			\node at (0, -0.75) {\scriptsize $G_2$-neighbors in $V_\varepsilon$};	
			\node at (0,    -4) {\scriptsize at most $\Delta_1$};
			\node at (0, -4.75) {\scriptsize $G_2$-neighbors in $W_\varepsilon$};
			\node at (0, -7.95) {\scriptsize $(q-\D_1)$-list-colorable};
			
			\node at (-12,     0) {\scriptsize at most $\Delta_1$};
			\node at (-12, -0.75) {\scriptsize $G_2$-neighbors in $V_\varepsilon$};	
			\node at (-12,    -4) {\scriptsize at most $\Delta_1$};
			\node at (-12, -4.75) {\scriptsize $G_2$-neighbors in $W_\varepsilon$};
			
			\node at (-21,     0) {\scriptsize at most $\Delta_1$};
			\node at (-21, -0.75) {\scriptsize $G_2$-neighbors in $V_\varepsilon$};	
			
			
			\node at (-21, 3) {\small Step 1};
			\node at (-12, 3) {\small Step 2};
			\node at (  0, 3) {\small Step 3};
			
		\end{tikzpicture}
		
	}
	
	\caption{The set of colored vertices in $G_2$ at the end of each step}
	\label{fig1}
	
\end{figure}

\subsection{Overview of the proof}

The main idea of the proof is to color the vertices $[n]$ of the square graph $G_2$ by dividing them into parts and assigning colors in a carefully chosen order greedily, as shown in Figure \ref{fig1}, where $\D_1\sim\D$. When assigning color to a vertex $v$, we ensure that the number of already colored neighbors of $v$ in $G_2$ is less than $q$. We can then use a greedy coloring strategy to obtain the claimed upper bound $q$ on the list chromatic number. 

From here onward, we use `neighbors' and `degree' specifically for the neighbors and degree of a vertex in $G_{n,p}$. Similarly, we use `$G_1$-neighbors' and `$G_1$-degree' when $G_1 = G_{n,p}$. We specify `$G_2$-neighbors' or `$G_2$-degree' when referring to the neighbors or the degree of a vertex in the square graph. We define $V_\e$ as the set of vertices of `high' degree (at least $\e$ fraction of maximum degree) and $W_\e$ as their closed neighborhood, for a carefully chosen $\e$. In particular, the aforementioned order of coloring vertices is: \vspace{-10pt}

\begin{enumerate}[\qquad Step 1.]
	\setlength\itemsep{0pt}
	\item All the vertices with a high degree ($V_\e$)
	\item Neighbors of all high-degree vertices ($W_\e \sm V_\e$)
	\item Remaining vertices ($[n]\sm W_\e$)
\end{enumerate} \vspace{-10pt}

We bound the number of $G_2$-neighbors of $V_\e$ within $V_\e$ by $\D_1 =\brac{1+2\th^{1/3}}\D$ in Corollary \ref{cor1}, ensuring step 1 of the coloring. The number of $G_2$-neighbors of $W_\e \sm V_\e$ within $W_\e$ is bounded by $\D_1$ in Corollary \ref{cor2}, which ensures the completion of step 2. For step 3, we prove that $G_2$ restricted to $[n]\sm W_\e$ can be list-colored with a small number of colors in Corollary \ref{cor3}. This number of extra colors along with the bound on the number of $G_2$-neighbors of $[n]\sm W_\e$ in $W_\e$ given by Corollary \ref{cor2} is smaller than $q$, completing step 3. 

Corollaries \ref{cor1} and \ref{cor2} are proved using the structural results of Lemmas \ref{lem1} and \ref{lem2}. The list-coloring claim for $[n]\sm W_\e$ in Corollary \ref{cor3} is proved by establishing that any subset $S\subseteq [n]\sm W_\e$ contains at most $(6+2c)\e\D|S|$ edges in $G_2$. For `large' sets $S$, this is proved in Lemma \ref{lem3} and `small' sets are handled by Lemmas \ref{lem4} and \ref{lem5}.

\subsection{Organization of the paper}

In Section \ref{2}, we provide the proofs of Corollaries \ref{cor1}, \ref{cor2} and \ref{cor3} that are required for the three steps of the coloring, and mention statements of all the supporting lemmas used in these proofs. Section \ref{sec3} contains the explicit details of the greedy coloring strategy and proofs of all the lemmas mentioned in Section \ref{2}. We conclude with Section \ref{4}, which mentions some directions for future work and our remarks on the result.

\section{Proof of Theorem \ref{th1}}\label{2}

Let $d(v)$ and $N(v)$ denote the degree and neighborhood of $v$ in $G_{n,p}$ respectively, and let $\D$ be the maximum degree of a vertex.

We can use the following high probability bounds for $\D$ taken from  \cite{FK}, Theorem 3.4:
\[
\frac{\log n}{\log\log n}\brac{1-\frac{3\log\log\log n}{\log\log n}}\leq \D\leq \frac{\log n}{\log\log n}\brac{1+\frac{3\log\log\log n}{\log\log n}}
\]
This implies that w.h.p.
\beq{DV}{
n^{1-\th}\leq \D^\D\leq n^{1+\th}\text{ where }\th=\frac{4\log\log\log n}{\log\log n}.
}

For the above value of $\th = o(1)$, we fix $$\e=\th^{1/2}.$$ 

For each $0<\a\leq 1$, we define $V_\a=\set{v:d(v)\geq \a\D}$ as the set of vertices with degree at least an $\alpha$ fraction of the maximum degree. Let $W_\a$ denote the closed neighborhood of $V_\a$, i.e., the neighbors of $V_\a$ in $G_{n,p}$ along with the vertices $V_\a$. A subset of vertices in $V_\e$ with sum of degrees comparable to our bound $q$ will be of interest. Thus, define a set of `good' $m$-tuples of degrees as
\[
L_m=\set{(\ell_1,\ell_2,\ldots,\ell_m)\in \set{\e\D,\e\D+1,\ldots,\D}^m\;:\sum_{i=1}^m\ell_i\geq (1+\th^{1/3})\D}.
\]

\subsection{Bounding the number of \texorpdfstring{$G_2$}{}-neighbors in \texorpdfstring{$W_\e$}{}}
The following two lemmas are needed to analyze the coloring of vertices in $W_\e$. We prove them in Section \ref{sec3}.

\begin{lemma}\label{lem1}
	W.h.p., $v,w\in V_{2/3}$ implies that $dist(v,w)\geq 10$. (Here $dist(.,.)$ is graph distance in $G_{n,p}$.)
\end{lemma}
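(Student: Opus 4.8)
The plan is to bound the expected number of ``bad'' configurations --- two vertices of large degree lying close together --- and show this expectation is $o(1)$, so that w.h.p. no such configuration occurs. First I would make the degree threshold concrete: the defining property of $W_{2/3}$ is that every $v\in W_{2/3}$ is within distance $1$ of some vertex $u$ with $d(u)\geq \tfrac23\D$. So if $v,w\in W_{2/3}$ with $dist(v,w)\leq 9$, then there are vertices $u_1,u_2$ with $d(u_i)\geq \tfrac23\D$ and $dist(u_1,u_2)\leq 11$. Hence it suffices to show that w.h.p. there is no pair $u_1,u_2$ with $d(u_i)\geq\tfrac23\D$ and $dist(u_1,u_2)\leq 11$; I will union-bound over this event.

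The key estimate is that $\Pr(d(u)\geq \tfrac23\D)$ is small. Since $d(u)\sim\mathrm{Bin}(n-1,c/n)$ is essentially Poisson with mean $c$, we have $\Pr(d(u)\geq k)\leq e^{-c}c^k/k!\cdot(1+o(1))\cdot\frac{1}{1-c/k}$ for $k>c$; with $k=\tfrac23\D$ this is of order roughly $k^{-k+o(k)} = \D^{-(2/3)\D+o(\D)}$, and by \eqref{DV} this is at most $n^{-2/3+o(1)}$. More carefully, using $\D^\D\geq n^{1-\th}$, one gets $\Pr(d(u)\geq\tfrac23\D)\leq n^{-2/3+O(\th)}$. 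The two events $\{d(u_1)\geq\tfrac23\D\}$ and $\{d(u_2)\geq\tfrac23\D\}$ are not independent if $u_1,u_2$ are close (they may share neighbors), but since they are at bounded distance they can share only $O(1)$ potential common neighbors beyond the $O(1)$-size path connecting them, so the joint probability is at most $n^{-4/3+o(1)}$ up to a constant factor --- one can either argue this directly via FKG/negative correlation of the relevant edge indicators, or simply bound the probability that $u_1$ has $\geq\tfrac23\D-11$ neighbors outside a fixed $O(1)$-set and independently the same for $u_2$.

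Finally I would count the configurations: there are at most $n^2$ choices for the ordered pair $(u_1,u_2)$, and for each fixed pair the event $dist(u_1,u_2)\leq 11$ requires a path of length $\leq 11$, contributing a factor at most $\sum_{\ell\le 11} n^{\ell-1}(c/n)^\ell = O(1)$ when summed against the $n^2$ already counted --- more precisely, the expected number of pairs $(u_1,u_2)$ at distance $\leq 11$ with both degrees $\geq\tfrac23\D$ is at most
\[
\sum_{\ell=1}^{11} n^{\ell+1}\bfrac{c}{n}^{\ell}\cdot n^{-4/3+o(1)} = n^{2-4/3+o(1)}\cdot O(1)= n^{2/3+o(1)}.
\]
This is not $o(1)$, so the naive first-moment bound as written is too weak, and that is the main obstacle: I expect the real argument must exploit that a vertex of degree $\geq\tfrac23\D$ is so rare that conditioning on $u_1$ being such a vertex, the ``cost'' of also having a nearby $u_2$ of degree $\geq\tfrac23\D$ must be paid by $u_2$'s edges to vertices \emph{other than} the $O(1)$ vertices near $u_1$, so the bound on the joint probability should really be $n^{-2/3-2/3+o(1)}$ with genuinely disjoint edge sets, and the path-counting factor $n^{\ell+1}(c/n)^\ell$ should instead be organized so that the $n^{\ell-1}$ internal-vertex choices are absorbed against one of the two $n^{-2/3}$ factors. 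Concretely: fix $u_1$ ($n$ choices), pay $n^{-2/3+o(1)}$ for its degree; then $u_2$ and the $\leq 10$ internal path vertices give $\leq n^{11}$ choices but the $\leq 11$ path-edges give $(c/n)^{11}$ hence net $n^{11}\cdot n^{-11}=O(1)$ choices weighted --- wait, this still leaves $n\cdot n^{-2/3}\cdot n^{-2/3} = n^{-1/3} = o(1)$ only if the path edges pay for the internal vertices, i.e. the correct grouping is $n^{2}\cdot (c/n)^{\geq 1}$-type; so the honest computation is $\sum_\ell (\text{ways to embed a length-}\ell\text{ path}) = \sum_\ell \binom{n}{\ell+1}(c/n)^\ell\cdot(\ell+1)! \leq \sum_\ell c^\ell n = O(n)$ expected such paths, times $n^{-4/3+o(1)}$ for the two degree events (now on disjoint edge-sets), giving $n^{1-4/3+o(1)}=n^{-1/3+o(1)}=o(1)$. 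So the real content is (i) the Poisson tail bound giving $n^{-2/3+o(1)}$ per large-degree vertex, and (ii) the bookkeeping that makes the two degree events refer to disjoint edge sets so their probabilities genuinely multiply; the distance constant $10$ (with slack $11$) is harmless since $11=O(1)$.
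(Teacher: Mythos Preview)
The paper does not actually supply a proof of Lemma~\ref{lem1}; it is stated and then used. So there is nothing in the paper to compare against, and your first-moment approach is the natural one. But note that the lemma as literally stated (with $W_{2/3}$) is false: any $u\in V_{2/3}$ together with all of its $\geq\tfrac23\D$ neighbors lies in $W_{2/3}$, and these vertices are pairwise at distance $\leq 2$. Your opening reduction --- ``$v,w\in W_{2/3}$ close implies some $u_1,u_2\in V_{2/3}$ with $dist(u_1,u_2)\leq 11$'' --- silently assumes $u_1\neq u_2$; when $u_1=u_2$ the reduction is vacuous and the original claim genuinely fails. The intended statement, consistent with how the lemma is invoked in the proofs of Corollaries~\ref{cor1} and~\ref{cor2}, must be that \emph{distinct} vertices of $V_{2/3}$ are far apart; you should flag this and prove that version directly rather than reduce to it.

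Your eventual calculation $\sum_{\ell=1}^{11} n^{\ell+1}(c/n)^\ell\cdot n^{-4/3+o(1)}=n^{-1/3+o(1)}=o(1)$ is correct, and the disjoint-edge-set justification for multiplying the two tail probabilities is sound once you subtract the $O(1)$ path vertices from each endpoint's potential neighborhood (so each factor is $\Pr(\mathrm{Bin}(n-O(1),c/n)\geq\tfrac23\D-O(1))\leq n^{-2/3+o(1)}$, and since $u_1\neq u_2$ the two events involve disjoint edge sets and are independent of each other and of the path edges). The middle stretch of your write-up --- where you first obtain $n^{2/3+o(1)}$, declare it too weak, and then recompute --- should simply be deleted; keep only the correct bookkeeping.
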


\begin{lemma}\label{lem2}
	Suppose that $m\leq 2/\e$. Then w.h.p. there does not exist a connected subset $S\subseteq [n]$ of $G_{n,p}$ with at  most $3m$ vertices, which contains an $m$-sized subset of vertices $w_i$ with a `good' $m$-tuple of degrees, i.e., $(d(w_i),i=1,2,\ldots,m)\in L_m$.
\end{lemma}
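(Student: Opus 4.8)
The plan is a first-moment computation. Call a \textbf{bad configuration} a triple consisting of a connected vertex set $S\seq[n]$ with $|S|=s\le 3m$, distinct vertices $w_1,\ldots,w_m\in S$, and a degree vector $(\ell_1,\ldots,\ell_m)\in L_m$, such that $G_{n,p}[S]$ is connected and $d(w_i)=\ell_i$ for every $i$; the lemma asserts that w.h.p.\ there is no bad configuration, so it suffices to prove that the expected number of them is $o(1)$. Since every tuple in $L_m$ has all coordinates in $\set{\e\D,\ldots,\D}$, there are at most $\D^m$ choices for $(\ell_1,\ldots,\ell_m)$, and the number of choices of $S$ and of the $w_i$'s is at most $\binom{n}{s}s^m$. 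Hence it is enough to fix $s$, $S$, $(w_i)$ and $(\ell_i)$, bound the probability of the corresponding event, and sum.

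The key observation is that the requirements ``$G[S]$ connected'' and ``$d(w_i)=\ell_i$ for all $i$'' are controlled by pairwise disjoint sets of potential edges. Connectivity of $G[S]$ forces a spanning tree, which uses only edges inside $S$, so $\Pr[G[S]\text{ connected}]\le s^{s-2}p^{s-1}$ by Cayley's formula. Since $|S|\le 3m$, a vertex $w_i$ with $d(w_i)=\ell_i$ has at least $f_i:=\ell_i-3m$ neighbors in $[n]\sm S$, and for distinct $i$ the candidate edge sets $\set{w_i u:u\in[n]\sm S}$ are disjoint from one another and from $\binom{S}{2}$. Therefore the event has probability at most
\[
s^{s-2}p^{s-1}\prod_{i=1}^{m}\binom{n}{f_i}p^{f_i}\ \le\ s^{s-2}p^{s-1}\prod_{i=1}^{m}\frac{c^{f_i}}{f_i!}.
\]
Because $\ell_i\ge\e\D\to\infty$, for large $n$ we have $f_i\ge\e\D/2$, whence $c^{f_i}/f_i!\le(ce/f_i)^{f_i}\le(2ce/(\e\D))^{f_i}$; multiplying over $i$ and using the defining inequality $\sum_i\ell_i\ge(1+\th^{1/3})\D$ of $L_m$ together with $\sum_i f_i=\sum_i\ell_i-3m^2$ gives the uniform bound $\brac{2ce/(\e\D)}^{(1+\th^{1/3})\D-3m^2}$.

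Summing, and using $\binom{n}{s}\le n^s$ and $n^sp^{s-1}=c^{s-1}n$, the expected number of bad configurations is at most
\[
\sum_{s\le 3m}n\cdot c^{s-1}s^{s+m-2}\D^m\cdot\brac{\frac{2ce}{\e\D}}^{(1+\th^{1/3})\D-3m^2}.
\]
Since $m\le 2/\e=O(\th^{-1/2})$ and $s\le 3m$, the factor $c^{s-1}s^{s+m-2}\D^m$ is $e^{O(m\log\D)}=e^{O(\th^{-1/2}\log\log n)}=n^{o(\th^{1/3})}$. For the main factor we invoke \eqref{DV}: from $\D^\D\ge n^{1-\th}$ we get $\D^{-\D}\le n^{-1+\th}$, and $\e^{-\D}=\th^{-\D/2}=n^{O(\th)}$, so $\brac{2ce/(\e\D)}^{\D}=n^{-1+O(\th)}$, while the surplus $\th^{1/3}\D$ copies contribute an extra factor $n^{-(1-o(1))\th^{1/3}}$ and the correction $-3m^2$ in the exponent is negligible because $m^2=O(\th^{-1})=o(\th^{1/3}\D)$. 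Altogether the expectation is at most $n\cdot n^{o(\th^{1/3})}\cdot n^{-1-(1-o(1))\th^{1/3}}=n^{-(1-o(1))\th^{1/3}}\to0$, since $\th^{1/3}\log n\to\infty$; Markov's inequality then proves the lemma.

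The delicate point is the bookkeeping in this last step: one must verify that the polynomially small gain $n^{-\Theta(\th^{1/3})}$ coming from the degree surplus really does dominate all the lower-order factors --- $c^{s-1}$, $s^{s+m-2}$, $\D^m$, the factorials $f_i!$, and the $n^{O(\th)}$ slack in \eqref{DV}. This holds precisely because the configurations are tiny, $s\le 3m=O(\th^{-1/2})$, so each of those factors is only $n^{o(\th^{1/3})}$; the argument would fail if $m$, and hence $s$, were permitted to grow like a power of $n$.
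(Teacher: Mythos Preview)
Your proof is correct and follows essentially the same first-moment/union-bound strategy as the paper: enumerate small connected sets via Cayley's formula, multiply by binomial tail bounds for the outside-degrees of the $w_i$, and verify that the surplus $\th^{1/3}\D$ in $\sum\ell_i$ beats all lower-order factors by $n^{-\Theta(\th^{1/3})}$. The only cosmetic differences are that you count degree vectors crudely by $\D^m$ (rather than summing over compositions of $D$) and use a single base $2ce/(\e\D)$ in the Stirling bound, and you are more explicit than the paper about the $-3m$ correction needed to make the ``outside-$S$'' degree events genuinely independent of the spanning-tree event.
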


\begin{corollary}\label{cor1}
	A vertex $v\in[n]$ has at most $\D_1 =\brac{1+2\th^{1/3}}\D$ $G_2$-neighbors in $V_\e$, w.h.p.
\end{corollary}
\begin{proof}
	Suppose $v$ has more than $\D_1$ $G_2$-neighbors in $V_\e$. Let $T$ be the tree obtained by Breadth-First Search to depth two from $v$ in $G_1=G_{n,p}$. We may assume this is a tree by ignoring the edges revisiting an explored vertex at depth two, if it has multiple parents at depth one. Remove all leaves from $T$ that are not in $V_\e$ and repeat, as shown in Figure \ref{fig2a}. We are left with a set of $G_1$-neighbors $W_0$ of $v$ in $V_\e$ and set of $G_1$-neighbors $u_1,u_2,\ldots,u_k$ of $v$ that are not in $V_\e$. In addition we have sets $W_1,W_2,\ldots,W_k\subseteq V_\e$ such that $u_i$ is a $G_1$-neighbor of all vertices in $W_i,i=1,2,\ldots,k$. The $G_2$-degree of $v$ is given by $D=\sum_{w\in W_0}d(w)+k+\sum_{i=1}^k|W_i|> \D_1$. The number of $G_2$-neighbors of $v$ in $V_\e$ is $m_1=\sum_{i=0}^k|W_i|$ and the tree $T$ contains $1+k+m_1\leq 2m_1+1$ vertices. Let $W=\bigcup_{i=0}^kW_i$ and add $v$ to $W$ if $v\in V_\e$. Then let $W=\{w_1,w_2,\ldots,w_m\}$ where $m=m_1+\mathbf{1}_{v\in V_\e}$. If $m\leq 2/\e$ then by letting $T$ take the place of $S$ in Lemma \ref{lem2}, we reach a contradiction. Otherwise, $M=\sum_{i=1}^m d(w_i)\geq md_{\min}$ where $d _{min}=\min\set{d(w):w\in W}$. But $d_{min}\geq \e\D$ and so $M>2\D$. It follows from Lemma \ref{lem1} that $d_{min}<2\D/3$ and so we can reduce $m$ by one, keeping $M>4\D/3$. If $m>2/\e$ after this update, we rewrite the expression for $M$ and repeat the argument. We eventually reduce $m$ to below $2/\e$ while keeping $M>(1+\th^{1/3})\D$. But now we contradict Lemma \ref{lem2} as discussed before.
\end{proof}

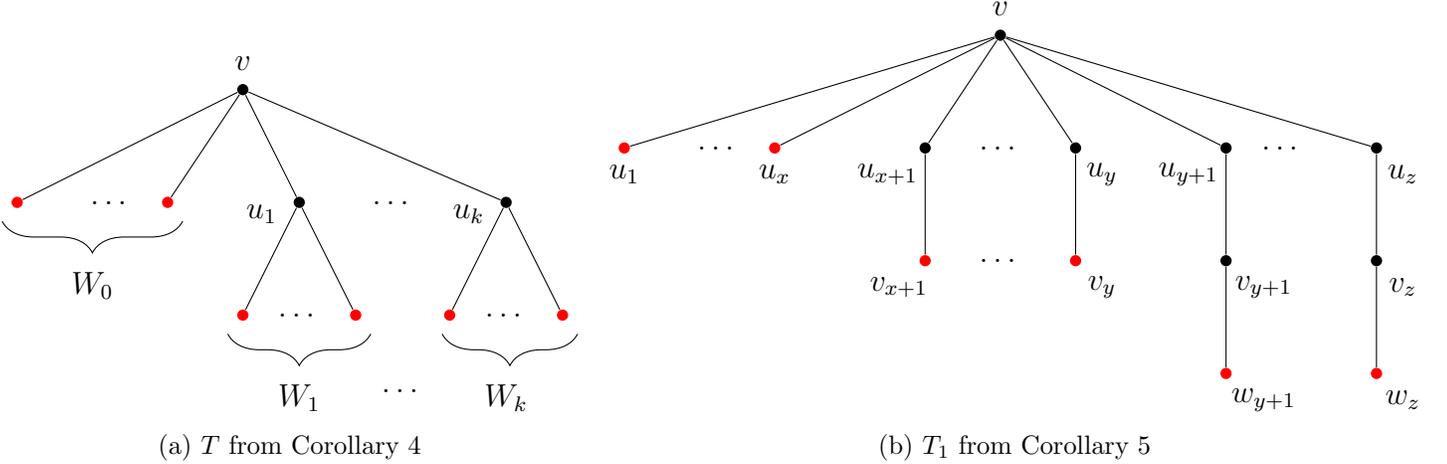
\begin{figure}[htbp]
	\centering 
	\subfloat[$T$ from Corollary \ref{cor1}]{
		\begin{tikzpicture}
			\node[fill=black, circle, inner sep=1.5pt] (v) at (0,2.5) {};
			\node at (0,2.85) {$v$};
			\node[fill=red, circle, inner sep=1.5pt] (w1) at (-3,1) {};
			\node[fill=red, circle, inner sep=1.5pt] (w2) at (-1,1) {};
			\node at (-1.75,1) {$\dots$};
			\node at (-2,-0.1) {$W_0$};
			\node[fill=black, circle, inner sep=1.5pt] (u1) at (0.75,1) {};
			\node at (0.25,0.85) {$u_1$};
			\node[fill=black, circle, inner sep=1.5pt] (uk) at (3.5,1) {};
			\node at (3,0.85) {$u_{k}$};
			\node at (2,1) {$\dots$};
			\node[fill=red, circle, inner sep=1.5pt] (v1) at (0,-0.5) {};
			\node[fill=red, circle, inner sep=1.5pt] (v2) at (1.5,-0.5) {};
			\node at (0.75,-0.5) {$\dots$};
			\node at (0.75,-1.6) {$W_1$};
			\node[fill=red, circle, inner sep=1.5pt] (vk1) at (2.75,-0.5) {};
			\node[fill=red, circle, inner sep=1.5pt] (vk2) at (4.25,-0.5) {};
			\node at (3.5,-0.5) {$\dots$};
			\node at (3.5,-1.6) {$W_{k}$};
			\node at (2.125,-1.5) {$\dots$};
			
			\draw (v) -- (w1);
			\draw (v) -- (w2);
			\draw (v) -- (u1);
			\draw (v) -- (uk);
			\draw (u1) -- (v1);
			\draw (u1) -- (v2);
			\draw (uk) -- (vk1);
			\draw (uk) -- (vk2);
			
			\draw [decorate,decoration={brace,amplitude=12}] (-0.8,0.75) -- (-3.2,0.75) ;
			\draw [decorate,decoration={brace,amplitude=12}] (1.7,-0.75) -- (-0.2,-0.75);
			\draw [decorate,decoration={brace,amplitude=12}] (4.45,-0.75) -- (2.65,-0.75) ;
			
		\end{tikzpicture}
		\label{fig2a}
	}	
	\subfloat[$T_1$ from Corollary \ref{cor2}]{
		\begin{tikzpicture}
			\node[fill=black, circle, inner sep=1.5pt] (v) at (0,2.5) {};
			\node at (0,2.85) {$v$};
			\node[fill=red, circle, inner sep=1.5pt] (u1) at (-5,1) {};
			\node at (-5,0.65) {$u_1$};
			\node at (-3.75,1) {$\dots$};
			\node[fill=red, circle, inner sep=1.5pt] (ux) at (-3,1) {};
			\node at (-3,0.65) {$u_x$};
			\node[fill=black, circle, inner sep=1.5pt] (ux+1) at (-1,1) {};
			\node at (-1.5,0.65) {$u_{x+1}$};
			\node at (0,1) {$\dots$};
			\node[fill=black, circle, inner sep=1.5pt] (uy) at (1,1) {};
			\node at (1.35,0.65) {$u_y$};
			\node[fill=black, circle, inner sep=1.5pt] (uy+1) at (3,1) {};
			\node at (2.5,0.65) {$u_{y+1}$};
			\node at (3.75,1) {$\dots$};
			\node[fill=black, circle, inner sep=1.5pt] (uz) at (5,1) {};
			\node at (5.35,0.65) {$u_z$};
			\node[fill=red, circle, inner sep=1.5pt] (vx+1) at (-1,-0.5) {};
			\node at (-1.35,-0.85) {$v_{x+1}$};
			\node at (0,-0.5) {$\dots$};
			\node[fill=red, circle, inner sep=1.5pt] (vy) at (1,-0.5) {};
			\node at (1.35,-0.85) {$v_y$};
			\node[fill=black, circle, inner sep=1.5pt] (vy+1) at (3,-0.5) {};
			\node at (3.5,-0.85) {$v_{y+1}$};
			\node[fill=black, circle, inner sep=1.5pt] (vz) at (5,-0.5) {};
			\node at (5.35,-0.85) {$v_z$};
			\node[fill=red, circle, inner sep=1.5pt] (wy+1) at (3,-2) {};
			\node at (3.5,-2.35) {$w_{y+1}$};
			\node[fill=red, circle, inner sep=1.5pt] (wz) at (5,-2) {};
			\node at (5.35,-2.35) {$w_z$};
			
			\draw (v) -- (u1);
			\draw (v) -- (ux);
			\draw (v) -- (ux+1);
			\draw (v) -- (uy);
			\draw (v) -- (uy+1);
			\draw (v) -- (uz);
			\draw (ux+1) -- (vx+1);
			\draw (uy) -- (vy);
			\draw (uy+1) -- (vy+1);
			\draw (uz) -- (vz);
			\draw (wy+1) -- (vy+1);
			\draw (wz) -- (vz);
			
		\end{tikzpicture}
		\label{fig2b}
	}
	\caption{Breadth-First-Search trees after deletions (red nodes are in $V_\e$).}
	\label{fig2}
\end{figure}

A similar argument bounds the number of $G_2$-neighbors in $W_\e$ for vertices outside $V_\e$.

\begin{corollary}\label{cor2}
	A vertex $v\notin V_\e$ has at most $\D_1 =\brac{1+2\th^{1/3}}\D$ $G_2$-neighbors in $W_\e$, w.h.p.
\end{corollary}
\begin{proof}
	Suppose $v$ has more than $\D_1$ $G_2$-neighbors in $W_\e$. Let $T$ be the tree obtained by Breadth-First Search to depth three from $v$ in $G_1=G_{n,p}$. Again, assuming this to be a tree involves ignoring the edges revisiting an already explored vertex, if it has multiple parents at smaller depths. Let this tree have levels $L_0=\set{v},L_1,L_2,L_3$.  Let the $G_1$-neighbors of $v$ be $\set{u_1,u_2,\ldots,u_k}$. Let $F_{i,t}$ for $t=2,3$ denote the vertices in $L_ t$ separated from $v$ in $T$ by $u_i$.
	
	We now define a subtree $T_1$ of $T$ that will take the place of $S$ in Lemma \ref{lem2}. To obtain $T_1$ we do the following: suppose that $u_1,u_2,\ldots,u_x$ are the neighbors of $v$ in $V_\e$. Delete $F_{i,2} \cup F_{i,3}$ from $T$ for $i\in [1,x]$. Now suppose that $X_i=F_{i,2}\cap V_\e\neq \emptyset$ for $i\in[x+1,y]$ and that $F_{i,2}\cap V_\e= \emptyset$ for $i\in [y+1,k]$. Choose one vertex $v_i\in X_i$ for each $i\in [x+1,y]$ and delete $X_i\setminus \set{v_i}$ and their children from $T$. Suppose also that $Y_i=F_{i,3}\cap V_\e\neq \emptyset$ for $i\in[y+1,z]$. Choose one vertex $v_i\in X_i$ along with one neighbor $w_i$ in $Y_i$ for each $i\in [y+1,z]$ and delete $X_i\setminus \set{v_i}$, $Y_i\setminus \set{w_i}$, and their children from $T$. For $i\in [z+1,k]$, we delete $u_i$ and the vertices $F_{i,2}\cup F_{i,3}$ from $T$. As shown in Figure \ref{fig2b}, $T_1$ is the tree that survives these deletions. All the leaves of $T_1$ are in $V_\e$. 
	
	The number of $G_2$-neighbors of vertex $v$ in $W_\e$ is at most \vspace{-2.75pt}
	$$ D=\sum_{i=1}^z d(u_i)+(k-z)\leq \sum_{i=1}^x d(u_i)+(z-x)\e\D+(k-z)\leq \sum_{i=1}^x d(u_i)+(z-x+1)\e\D. $$
	Our assumption is that $D > \D_1$. Now let $M=\sum_{w\in V(T_1)\cap V_{\e}}d(w)$. Then, \vspace{-2.75pt}
	$$M\geq \sum_{i=1}^x d(u_i)+(z-x)\e\D\geq D-\e\D>(1+2\th^{1/3}-\th^{1/2})\D>(1+\th^{1/3})\D.$$
	This is due to the fact that we get a contribution of at least $\e\D$ from each surviving member of $F_{i,2}$ and $F_{j,3}$ for $i\in[x+1,y]$ and $j\in[y+1,z]$. The number of vertices $N$ in the tree $T_1$ satisfies
	$$N\leq 1+x+2(y-x)+3(z-y)\leq 3|V(T_1)\cap V_\e|.$$
	Put $m=|V(T_1)\cap V_\e|$. If $m\leq 2/\e$, the tree $T_1$ contradicts Lemma \ref{lem2}. Assume then that $m>2/\e$, hence $M \geq m\e\D>2\D$. If the smallest degree in $V(T_1)\cap V_\e$ is at least $2\D/3$, then it follows from Lemma \ref{lem1} that $T$ consists only of $u_1\in V_{2/3}$ and the neighbors of $u_1$. In this case, the corollary holds trivially. Otherwise, we can delete a vertex of degree less than $2\D/3$ and reduce $m$ by one, keeping $M>(1+\th^{1/3})\D$. Similar to the proof of Corollary \ref{cor1}, this leads to a contradiction.
\end{proof}

\subsection{Extra colors required for \texorpdfstring{$[n]\sm W_\e$}{}}
The next part of our strategy is to bound the number of $G_2$-edges contained in any set $S$ that is disjoint from $W_\e$. We prove a high probability bound of $(6+2c)\e\D|S|$, which will imply Corollary \ref{cor3} as desired. 

For `large' sets $S$, the following lemma can be invoked for the desired bound. 
\begin{lemma}\label{lem3}
	The total number of edges in $G_2$ is less than $c(c+1)n$ w.h.p.
\end{lemma}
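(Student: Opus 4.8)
The plan is to bound the number of $G_2$-edges by separately controlling the two ways such an edge can arise: either it is already an edge of $G_1=G_{n,p}$, or it is a new edge joining two vertices that share a common neighbor in $G_1$. The first contribution is easy — w.h.p. $G_{n,p}$ with $p=c/n$ has at most $(1+o(1))cn/2 < cn$ edges, since the number of edges is $\mathrm{Bin}\brac{\binom{n}{2},p}$ with mean $\sim cn/2$, and a Chernoff bound gives the required concentration.

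For the second contribution, observe that each vertex $u$ contributes at most $\binom{d(u)}{2}$ such ``distance-exactly-two'' pairs through itself, so the number of $G_2$-edges not already in $G_1$ is at most $\sum_{u\in[n]}\binom{d(u)}{2}\leq \frac12\sum_{u}d(u)^2$. Thus it suffices to show $\sum_u d(u)^2 \leq 2c^2 n$ w.h.p. First I would bound the sum of squares of degrees by (twice the number of edges) plus (twice the number of paths of length two), i.e. $\sum_u d(u)^2 = \sum_u d(u) + 2\sum_u \binom{d(u)}{2}$; the cleaner route is to compute $\E\brac{\sum_u d(u)(d(u)-1)}$ directly. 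Each vertex $u$ has expected number of ordered pairs of distinct neighbors equal to $(n-1)(n-2)p^2 \sim c^2$, so $\E\brac{\sum_u d(u)(d(u)-1)} \sim c^2 n$, and hence $\E\brac{\sum_u d(u)^2}\sim c^2 n + cn$. Adding the $\leq cn$ edges of $G_1$ then gives expected total $G_2$-edge count at most $\brac{\tfrac12 c^2 + c + \tfrac12 c + o(1)}n < c(c+1)n$ for large $n$, with room to spare.

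The remaining task is concentration: I must show $\sum_u d(u)^2$ (equivalently the number of paths of length two, $P_2$-count) does not exceed its mean by more than a lower-order term w.h.p. Since $p=c/n$ is a constant-average-degree regime, second-moment or Markov-type arguments suffice here — one can compute $\Var$ of the $P_2$-count (it is $O(n)$, dominated by the variance coming from pairs of $P_2$'s sharing an edge, since $np=c=O(1)$) and apply Chebyshev, or simply apply Markov's inequality to $\sum_u\binom{d(u)}{2}$ after noting the expectation bound, accepting a slightly larger absolute constant — the statement only asks for the clean bound $c(c+1)n$, and the slack between $\tfrac12 c^2+\tfrac32 c$ and $c^2+c$ comfortably absorbs a constant-factor Markov loss is \emph{not} quite enough, so Chebyshev on the $P_2$-count is the safe choice. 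The main (mild) obstacle is therefore just the variance computation for the number of paths of length two; everything else is a routine first-moment estimate. Alternatively, one can avoid variance entirely by conditioning on the degree sequence and using the known w.h.p. bound $\Delta = O(\log n/\log\log n)$ together with $\sum_u d(u) \leq 2cn$ w.h.p., but that only yields $\sum_u d(u)^2 = O(cn\log n/\log\log n)$, which is too weak; hence the direct $P_2$-counting with Chebyshev is the approach I would commit to.
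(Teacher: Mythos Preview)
Your approach is correct and takes a genuinely more elementary route than the paper. You decompose the $G_2$-edge count into the $G_1$-edges plus the cherry count $P_2=\sum_u\binom{d(u)}{2}$, handle the first by Chernoff, and propose Chebyshev on the second after observing that $\Var(P_2)=O(n)$; this variance claim is indeed correct, the dominant covariance coming from pairs of cherries sharing one edge and contributing $O(n^4p^3)=O(n)$. Since the target $c(c+1)n$ exceeds the true expectation $\sim c(c+1)n/2$ by a $\Theta(n)$ margin, Chebyshev with $O(n)$ variance comfortably suffices. One small slip: your stated expectation bound $(\tfrac12 c^2+\tfrac32 c)n$ actually \emph{exceeds} $c(c+1)n$ when $c<1$, because you simultaneously used the loose bounds $\binom{d(u)}{2}\le d(u)^2/2$ and $|E(G_1)|<cn$. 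Reverting either one to the sharp version --- $\E[P_2]\sim c^2n/2$, or $|E(G_1)|\sim cn/2$ --- restores a uniform $\Theta(n)$ slack for every $c>0$, so this is cosmetic rather than a real gap.

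The paper instead bounds the $G_2$-edge count by $W=\sum_i \tfrac{d(i)(d(i)+1)}{2}$, computes $\E W\sim c(c+1)n/2$ directly, and then obtains concentration via Warnke's method of typical bounded differences: on the high-probability event $\{\D\le\log n\}$ a single coordinate change alters $W$ by at most $\log^2 n$ (versus the worst-case Lipschitz constant $n^2$), and the theorem converts this into an exponentially small tail. That gives much stronger deviation bounds than your $O(1/n)$ from Chebyshev, but none of that extra strength is used downstream in the paper, so your second-moment argument is entirely adequate --- and arguably cleaner --- for Lemma~\ref{lem3} as stated.
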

The proof of Lemma \ref{lem3} can be found in Section \ref{sec3}.

For $2\leq s\leq n$ let $\n_s$ be the maximum number of $G_2$-edges in a set of size $s$. 
\beq{eq1}{
	\text{If }k_0=2/\e^2\text{ then }\n_s\leq 10k_0c^3s\text{ for } s\geq n/(10ck_0).
}
In particular, the above statement derived using Lemma \ref{lem3} makes the notion of `large' precise for sets $S$. For further analysis, we restrict to the case of $|S| \leq n/(10ck_0) = n\e^2/(20c)$.

Let $e(S)$ denote the number of edges inside $S$ in $G_{n,p}$. We first show that the expected number of sets $S$ with more than $\frac{\e\D}{2}|S|$ edges is $o(1)$. Let $X_S$ denote the number of such sets of size at most $s_0 = n\e^2/(20c)$.
\beq{eq4}{
	\begin{aligned}
		\E\left(X_S\right) & \leq \sum_{s=4}^{s_0} \binom{n}{s} \Pr\left([s] \text{ contains $\frac{\e\D s}{2}$ edges}\right) \\
		& \leq \sum_{s=4}^{s_0} \left(\frac{ne}{s}\right)^s \E\left(\text{number of sets of $\frac{\e\D s}{2}$ or more edges in [s]}\right)\\
		& \leq \sum_{s=4}^{s_0} \left(\frac{ne}{s}\right)^s \binom{\binom{s}{2}}{\e\D s/2} \left(\frac{c}{n}\right)^{\e\D s/2} \\
		& \leq \sum_{s=4}^{s_0} \left(\frac{ne}{s}\right)^s \left(\frac{es^2c}{\e\D sn}\right)^{\e\D s/2} \\
		& \leq \sum_{s=4}^{s_0} \left(\frac{ne}{s}\right)^s \left(\frac{e\e}{20\D }\right)^{\e\D s/2}. \\
	\end{aligned}
} 
Let $u_s = \left(\frac{ne}{s}\right)^s \left(\frac{e\e}{20\D }\right)^{\e\D s/2}$. If $s\leq \log^2 n$, then $u_s \leq n^{-1/2}$ implying $\sum\limits_{s=4}^{\log^2 n} u_s \leq \frac{\log ^2n}{n^{1/2}} =o(1)$. If $s\geq \log^2 n$, then $u_s \leq (e/20)^{\log^2 n}$ so $\sum\limits_{s=\log^2 n}^{s_0} u_s \leq n (e/20)^{\log^2 n} = o(1)$. Thus, $e(S)\leq \frac{\e\D}{2} |S|$ for all $S$ w.h.p.

\begin{figure}[htbp]
	\centering
	
	\resizebox{0.75\textwidth}{!}
	{\begin{tabular}{cc}
		\begin{tikzpicture}[framed, scale=1]
			
			\draw (-3,0) ellipse (1.5 and 1);
			\node at (-3,-1.5) {$S$};
			
			\node[fill=black, circle, inner sep=1.5pt] (u1) at (-3.75,0) {};
			\node at (-4,-0.25) {$u$};
			\node[fill=black, circle, inner sep=1.5pt] (w1) at (-2.25,0) {};
			\node at (-2,-0.25) {$w$};
			\node[fill=black, circle, inner sep=1.5pt] (v1) at (-3,1.5) {};
			\node at (-3,1.8) {$v$};
			
			\draw (u1) -- (v1);
			\draw (w1) -- (v1);
			
			\draw (2,0) ellipse (1.5 and 1);
			\node at (2,-1.5) {$S$};
			
			\node[fill=black, circle, inner sep=1.5pt] (u2) at (1.25,0) {};
			\node at (1,-0.25) {$u$};
			\node[fill=black, circle, inner sep=1.5pt] (w2) at (2.75,0) {};
			\node at (3,-0.25) {$w$};
			\node[fill=black, circle, inner sep=1.5pt] (v2) at (2,1.5) {};
			\node at (2,1.8) {$v$};
			
			\draw (u2) -- (v2);
			\draw (w2) -- (v2);
			\draw[thick, red] (u2) -- (w2);
			
			\draw[->, thick, double, gray] (-1,0) -- (0,0);
			\node at (-0.5, 1.8) {Type 1};
		\end{tikzpicture} \ & \ 
		
		\begin{tikzpicture}[framed, scale=1]
				
			\draw (1,0) ellipse (1.5 and 1);
			\node at (1,-1.5) {$S$};
			
			\node[fill=black, circle, inner sep=1.5pt] (u3) at (0.25,-0.3) {};
			\node at (0,-0.55) {$u$};
			\node[fill=black, circle, inner sep=1.5pt] (w3) at (1.75,-0.3) {};
			\node at (2,-0.55) {$w$};
			\node[fill=black, circle, inner sep=1.5pt] (v3) at (1,0.5) {};
			\node at (1,0.75) {$v$};
			
			\draw (u3) -- (v3);
			\draw (w3) -- (v3);
			
			\draw (6,0) ellipse (1.5 and 1);
			\node at (6,-1.5) {$S$};
			
			\node[fill=black, circle, inner sep=1.5pt] (u4) at (5.25,-0.3) {};
			\node at (5,-0.55) {$u$};
			\node[fill=black, circle, inner sep=1.5pt] (w4) at (6.75,-0.3) {};
			\node at (7,-0.55) {$w$};
			\node[fill=black, circle, inner sep=1.5pt] (v4) at (6,0.5) {};
			\node at (6,0.75) {$v$};
			
			\draw (u4) -- (v4);
			\draw (w4) -- (v4);
			\draw[thick, red] (u4) -- (w4);
			
			\draw[->, thick, double, gray] (3,0) -- (4,0);
			\node at (3.5, 1.8) {Type 2};
			
		\end{tikzpicture}
	\end{tabular}}
	
	\caption{The two types of edges introduced in a set $S$ when squaring graph $G$}
	\label{fig3}
	
\end{figure}
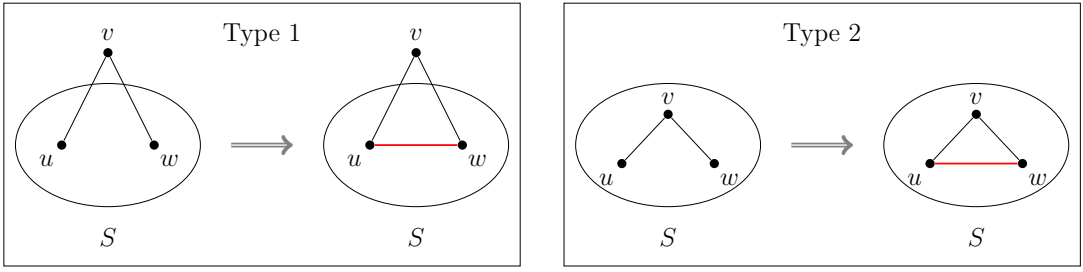

We now focus on the `new' edges in $G_2$, which were not present in $G_{n,p}$. For $u,w \in S$, a new edge $uw$ appears in $S$ when squaring $G_{n,p}$ only due to the existence of a common neighbor $v$. From containment of such a neighbor in $S$, the new edges in $G_2$ can be of two types seen in Figure \ref{fig3}.  \vspace{-15pt}

\begin{enumerate}[\qquad Type 1.]
	\setlength{\itemsep}{0pt}
	\item $v\notin S$, bounded by Lemma \ref{lem4}
	\item $v\in S$, bounded by Lemma \ref{lem5}
\end{enumerate} \vspace{-15pt}

If $S\cap W_\e=\emptyset$ then a vertex outside $S$ has at most $\e\D$ neighbors in $S$.  For a fixed set $S$ let $a_{k,S}$ denote the number of (vertex, set) pairs $(v,T)$ where $v\notin S$ and $T=N(v)\cap S$ with $|T|=k$.

\begin{lemma}\label{lem4}
	The following holds w.h.p. Let $A_1(S)=\sum_{k\leq \e\D}a_{k,S}k^2$ bound the number of $G_2$-edges of type 1 in $S$. Then $A_1(S)\leq 5\e\D |S|$ for all $S$ with $|S|\leq n/(10ck_0)$.
\end{lemma}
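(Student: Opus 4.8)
The plan is to split $A_1(S)$ into a linear and a quadratic part and to control the quadratic part with the standard fact that $G_{n,p}$, $p=c/n$, has no small dense subgraph. Throughout write $s=|S|$, let ``edges'' mean $G_1$-edges, and for $X\seq[n]$ let $e(X)$ denote the number of edges induced by $X$. Since $S\cap W_\e=\es$ we have $S\cap V_\e=\es$, so every $x\in S$ has degree $d(x)<\e\D$, and --- as already recorded in the text --- every $v\notin S$ has $\ell_v:=|N(v)\cap S|\le\e\D$; hence $a_{k,S}=0$ for $k>\e\D$ and $A_1(S)=\sum_{v\notin S}\ell_v^2$. Using the identity $\ell_v^2=\ell_v+2\binom{\ell_v}{2}$ gives
\[
A_1(S)=e_1(S)+2\,\mathrm{ch}(S),\qquad e_1(S):=\sum_{v\notin S}\ell_v,\quad \mathrm{ch}(S):=\sum_{v\notin S}\binom{\ell_v}{2},
\]
where $e_1(S)$ is the number of edges leaving $S$ and $\mathrm{ch}(S)$ counts the cherries (paths of length two) with centre outside $S$ and both endpoints in $S$. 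The linear part is harmless: $e_1(S)\le\sum_{x\in S}d(x)<\e\D s$. So it suffices to prove $\mathrm{ch}(S)\le 2\e\D s$ for every valid $S$ with $s\le s_0:=n/(10ck_0)$.

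For this I would invoke the following routine first‑moment fact (see e.g.\ \cite{FK}): there are constants $\eta\in(0,1/3)$ and $\d_1>0$ such that w.h.p.\ $e(T)\le(1+\eta)|T|$ for every $T\seq[n]$ with $|T|\le\d_1 n$. Fix a valid $S$ and let $C=\set{v\notin S:\ell_v\ge 2}$ be the set of cherry centres. First I claim $|C|<2s$: otherwise pick $C'\seq C$ with $|C'|=2s$ and set $T=S\cup C'$; the edges joining $C'$ to $S$ are pairwise distinct and number at least $2|C'|=4s$, whereas $|T|=3s\le 3s_0=o(n)\le\d_1 n$ forces $e(T)\le(1+\eta)\,3s<4s$, a contradiction. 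Hence $S^{\ast}:=S\cup C$ has $|S^{\ast}|=s+|C|<3s\le\d_1 n$, so a second application of the fact bounds the number of edges between $S$ and $C$ by $e(S^{\ast})\le(1+\eta)|S^{\ast}|<4s$. Since $\ell_v\le\e\D$ for $v\in C$ we have $\binom{\ell_v}{2}\le\tfrac12\e\D\,\ell_v$, whence
\[
\mathrm{ch}(S)=\sum_{v\in C}\binom{\ell_v}{2}\le\frac{\e\D}{2}\sum_{v\in C}\ell_v=\frac{\e\D}{2}\cdot\#\set{\text{edges between }S\text{ and }C}<2\e\D s .
\]
Combining, $A_1(S)=e_1(S)+2\,\mathrm{ch}(S)<\e\D s+4\e\D s=5\e\D s$, as required.

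The crux is the middle step, and the one point there that needs care is an apparent circularity: to bound the edges between $S$ and $C$ one wants to apply the no‑dense‑subgraph fact to $S\cup C$, but a priori $|C|$ could be as large as $n-s$. The resolution is the two‑stage argument above --- first apply the fact to the set $S\cup C'$ of controlled size $3s=o(n)$ to learn $|C|<2s$, after which $S\cup C$ is small enough for a second application. Note that the no‑dense‑subgraph statement is needed only for sets of size up to $3s_0=o(n)$ (so any fixed $\d_1>0$ works) and only with density constant $1+\eta<4/3$ (so any $\eta<1/3$ works), both comfortably in the standard range. Everything else --- the degree bound on $S$ and the bound $\ell_v\le\e\D$ for $v\notin S$ --- is supplied by $S\cap W_\e=\es$ exactly as noted just before the statement of the lemma.
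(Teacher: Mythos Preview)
Your proof is correct and takes a genuinely different route from the paper's. The paper argues by direct first-moment calculation: it shows via a union bound over all $S$ that $\sum_{k=k_1}^{k_2} a_{k,S} \leq (1+\e)|S|/k_1$ for every dyadic block $[k_1,k_2]\subseteq(k_0,\e\D]$, proves $a_{k,S}\le(10c)^{k_0}|S|$ separately for each $k\le k_0$, and then sums over the blocks. Your decomposition $\ell_v^2=\ell_v+2\binom{\ell_v}{2}$ is cleaner: the linear part $e_1(S)$ is immediate from the degree condition on $S$, and the cherry count is handled by two applications of the standard ``small sets are sparse'' property of $G_{n,c/n}$ (first to cap $|C|$, then to cap the number of $S$--$C$ edges), which replaces the bespoke union-bound computation entirely. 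What you gain is simplicity and a transparent reason for the $5\e\D$ constant; what the paper's approach buys is slightly more generality: your bound $e_1(S)<\e\D s$ uses $S\cap V_\e=\emptyset$, so you are proving the inequality only for $S$ disjoint from $W_\e$, whereas the paper's union bound goes through for all $S$ with $|S|\le n/(10ck_0)$. This is harmless here since the only application is precisely to such $S$.
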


We now deal with the number of edges of type 2, i.e., $uw$ coming from paths $uvw$ of length two where $\set{u,v,w}\subseteq S$. Denote the number of such paths by $A_2(S)$. 

\begin{lemma}\label{lem5}
	W.h.p., $A_2(S)\leq (2c+1/2)\e\D|S|$ for all $S$ with $|S|\leq n/(10ck_0)$.
\end{lemma}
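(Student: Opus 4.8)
The plan is to decompose $A_2(S)=e(S)+P(S)$, where $e(S)$ is the number of edges of $G_{n,p}$ with both ends in $S$ and $P(S)$ is the number of two‑edge paths $uvw$ with $\{u,v,w\}\seq S$. Writing $e_S(v):=|N(v)\cap S|$, we have $\sum_{v\in S}e_S(v)=2e(S)$ and, counting each path at its middle vertex, $P(S)=\sum_{v\in S}\binom{e_S(v)}{2}$. I will bound both terms uniformly over all $S\seq[n]\sm W_\e$ with $2\le|S|\le n/(10ck_0)$.

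\textbf{Step 1 (edges inside $S$).} First I would control $e(S)$ by a first moment bound: for any target $t$,
\[
\Pr\big[\exists\,S:\ |S|=s,\ e(S)\ge t\big]\ \le\ \binom ns\binom{\binom s2}{t}p^{t}\ \le\ \Big(\frac{en}{s}\Big)^{s}\Big(\frac{es^2p}{2t}\Big)^{t}.
\]
Taking $t=\rdup{(1+\d_n)s}$ for a suitably slow $\d_n\to0$ (say with $\d_n\log(1/\e)\to\infty$), the right‑hand side is at most $\big[\,e\,(ec/2)^{1+\d_n}(s/n)^{\d_n}\,\big]^{s}$; since $s/n\le\e^2/(20c)$ the base tends to $0$, so summing over $s$ shows that w.h.p.\ $e(S)\le(1+o(1))|S|$ for every such $S$ — and, crucially, for every subset of such an $S$ as well.

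\textbf{Step 2 (paths inside $S$).} Because $V_\e\seq W_\e$ and $S\cap W_\e=\es$, every $v\in S$ has $d(v)<\e\D$, hence $e_S(v)<\e\D$. The crude estimate $P(S)=\sum_{v\in S}\binom{e_S(v)}{2}\le\tfrac{\e\D}2\sum_{v\in S}e_S(v)=\e\D\,e(S)$ together with Step 1 already yields $A_2(S)=O(\e\D|S|)$, which is all Theorem~\ref{th1} needs. To reach the stated constant I would split $S=S_{\mathrm{hi}}\cup S_{\mathrm{lo}}$ at the threshold $L=\rdup{\sqrt{\e\D}}$, with $S_{\mathrm{hi}}=\{v\in S:\ e_S(v)>L\}$. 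For the low part, $\sum_{v\in S_{\mathrm{lo}}}e_S(v)^2\le L\sum_{v\in S}e_S(v)=2L\,e(S)=o(\e\D)\,e(S)$. For the high part, put $H=\sum_{v\in S_{\mathrm{hi}}}e_S(v)=2\,e(G_{n,p}[S_{\mathrm{hi}}])+e(S_{\mathrm{hi}},S_{\mathrm{lo}})\le 2(1+o(1))|S_{\mathrm{hi}}|+e(S)$ by Step 1; since $|S_{\mathrm{hi}}|<H/L$ this gives $H\le 2(1+o(1))H/L+e(S)$, hence $H\le(1+o(1))e(S)$, and so $\sum_{v\in S_{\mathrm{hi}}}e_S(v)^2<\e\D H\le(1+o(1))\e\D\,e(S)$. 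Therefore $\sum_{v\in S}e_S(v)^2\le(1+o(1))\e\D\,e(S)$, and from $P(S)=\tfrac12\sum_{v\in S}e_S(v)^2-e(S)$ and Step 1,
\[
P(S)\ \le\ \big(\tfrac12+o(1)\big)\e\D\,e(S)\ \le\ \big(\tfrac12+o(1)\big)\e\D|S|.
\]

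Combining, $A_2(S)=e(S)+P(S)\le(\tfrac12+o(1))\e\D|S|\le(2c+\tfrac12)\e\D|S|$ once $n$ is large (as $c>0$ is fixed); the extremal configuration is an almost‑complete star $S=\{v\}\cup N(v)$ with $d(v)\approx\e\D$, for which indeed $P(S)\approx\binom{\e\D}{2}\approx\tfrac12\e\D|S|$. \textbf{The main obstacle} is the uniformity — the estimates must hold simultaneously for all $\binom ns$ sets of each size $s$ — and this is precisely what the first moment of Step 1 delivers; it works only because $|S|/n\le\e^2/(20c)\to0$, which forces $\log(n/|S|)\to\infty$ and makes the geometric series converge. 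Everything in Step 2 is deterministic, relying on the $W_\e$‑avoidance of $S$ (to cap all degrees inside $S$ by $\e\D$) and on Step 1 being available for every subset of $S$ (to stop a few high‑$e_S$ vertices from inflating the path count). Since $\e\D=\th^{1/2}\D=o(\D)$, the precise leading constant is immaterial for Theorem~\ref{th1}.
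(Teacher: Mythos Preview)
Your proof is correct, and its skeleton is the paper's: bound $e(S)$ by a first‑moment union bound over all small $S$, then exploit the degree cap $d_S(v)<\e\D$ (coming from $S\cap V_\e=\emptyset$) to control the path count. The paper stops exactly at your ``crude estimate'': it writes
\[
A_2(S)\ \le\ \sum_{v\in S}\frac{d_S(v)(d_S(v)+1)}{2}\ \le\ \e\D\Bigl(e(S)+\tfrac{|S|}{2}\Bigr),
\]
and then proves $e(S)\le 2c|S|$ for all $|S|\le n/(10ck_0)$ by the same union bound you use, yielding the stated constant $2c+\tfrac12$. You instead tighten Step~1 to $e(S)\le(1+o(1))|S|$ (legitimate, since $s/n\le\e^2/(20c)\to0$ lets the factor $(s/n)^{\d_n}$ absorb the constant $e(ec/2)^{1+\d_n}$ once $\d_n\log(1/\e)\to\infty$), and your high/low split in Step~2 then squeezes out the $c$‑free bound $(\tfrac12+o(1))\e\D|S|$. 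So the paper's argument is the short version of yours; your refinement buys a sharper constant at the cost of the extra bootstrap on $S_{\mathrm{hi}}$, which --- as you correctly observe --- is immaterial for Theorem~\ref{th1}.
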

We provide proofs of the above two lemmas in Section \ref{sec3}. 

It follows from lemmas \ref{lem4} and \ref{lem5} and the upper bound on $e(S)$ obtained using \ref{eq4}, we have w.h.p.,
\beq{eq2}{
e(S) + A_1(S)+A_2(S)\leq (6+2c)\e\D|S|.
}
From \ref{eq1} and \ref{eq2}, we have established that any $S\seq [n]\sm W_\e$ contains at most $(6+2c)\e\D|S|$ edges w.h.p. 

\begin{corollary}\label{cor3}
	The vertices of $[n]\setminus W_\e$ can be list-colored with lists of at most $(6+2c)\e\D+1$ colors. 
\end{corollary}
\begin{proof} 
	We derive the required result by proving a more general statement with the principle of strong mathematical induction: for any vertex set $V$, if every subset $S\seq V$ contains at most $r|S|$ edges then $V$ is $(r+1)$-list-colorable. Clearly if $|V|\leq r+1$ then the result holds as every vertex has at most $r$ neighbors. Assume the result holds for all vertex sets of size $1,2,..,t$ for some $t\in \N$. For a set $V$ of size $|V|=t+1$, we have at most $r(t+1)$ edges inside. Then, there exists a vertex $v$ of degree $\leq r$. The induced subgraph on $V\sm\{v\}$ has vertex set of size $t$ with the required property on edges, and hence is $(r+1)$-list-colorable by the induction hypothesis. We assign colors to all the vertices in $V\sm \{v\}$ first. Now since $v$ has degree $\leq r$, there is at least one color available for $v$ from its list of size $r+1$, which is not used by any neighbors in $V\sm\{v\}$.
\end{proof}

\section{List coloring of \texorpdfstring{$G_2$}{} and proofs of lemmas} 
\label{sec3}

Given the above results on the number of neighbors, we can list-color $G_2$ as follows:
\begin{enumerate}[(1)]
	
	\item We list-color $V_{\e}$ with lists of size $q=\D(1+3\th^{1/3})$ colors. We do this greedily by arbitrarily ordering the vertices in $V_\e$ and coloring a vertex with the lowest index color available. Corollary \ref{cor1} implies that any vertex $v$ has at most $\D\brac{1+2\th^{1/3}}$ $G_2$-neighbors in $V_\e$ and so there will be an unused color.
	
	\item We list-color $W_{\e}\setminus V_\e$ with lists of size $q=\D(1+3\th^{1/3})$ greedily, i.e., we arbitrarily order the vertices in $W_{\e}\setminus V_\e$ and color a vertex with the lowest index color available. Corollary \ref{cor2} implies that any vertex $v\notin V_\e$ has at most $\D\brac{1+2\th^{1/3}}$ $G_2$-neighbors in $W_\e$ and so there will be an unused color. 
	
	\item We then list-color $[n]\setminus W_\e$ with at most $\D\brac{1+2\th^{1/3}+(6+2c)\th^{1/2}}+1$ colors greedily. This follows similarly from Corollaries \ref{cor2} and \ref{cor3}. Since $\th = o(1)$, the size of lists is bounded above by $q = \D(1+3\th^{1/3})$.
	
\end{enumerate}

\subsection{Proof of Lemma \ref{lem1}}

\begin{proof}
	To bound the probability of having $dist(v,w)<10$ for two vertices, we can count all paths of length $k$ between them in $\binom{n}{k} k!$ ways, for $k=1,2...,9$. We also need $v,w\in V_{2/3}$, so we can multiply by square of the probability of a vertex having a high degree. Let $\ell_0=2\D/3-10$. We have
	\begin{align*}
		\Pr(\exists v,w\in V_{2/3}:dist(v,w)<10)&\leq \sum_{k=1}^9\binom{n}{k}k!p^{k-1}\brac{\sum_{\ell=\ell_0}^{n-1}\binom{n}{\ell}p^\ell(1-p)^{n-10-\ell}}^2\\
		&\leq \sum_{k=1}^9nc^{k-1}n^{-4/3+o(1)}=o(1).
	\end{align*}
\end{proof}

\subsection{Proof of Lemma \ref{lem2}}

\begin{proof} There are $\binom{n}{s}$ choices for $S$, where $|S|=s\in[m,3m]$. There are at most $s^{s-2}$ choices for a spanning tree of $S$. We can choose the vertices of large degree in $\binom{s}{m}$ ways. Let $D$ be the sum of degrees of the chosen $m$ vertices. The probability that these vertices have large degrees can be bounded by a product, so
	\begin{align*}
		\Pr(\exists S)&\leq \sum_{m=2}^{2/\e}\sum_{s=m}^{3m}\binom{n}{s}s^{s-2}p^{s-1}\binom{s}{m}\sum_{D\geq (1+\th^{1/3})\D}\sum_{\ell_1+\cdots+\cdots\ell_m=D}\prod_{i=1}^m\brac{\sum_{k=\ell_i}^{n-s}\binom{n-s}{k}p^k\brac{1-p}^{n-s-k}}\\ 
		&\leq \sum_{m=2}^{2/\e}\sum_{s=m}^{3m}\binom{n}{s}s^{s-2}p^{s-1}2^s\sum_{D\geq (1+\th^{1/3})\D}\sum_{\ell_1+\cdots+\cdots\ell_m=D}\prod_{i=1}^m n^{-\ell_i/\D+O(\th)}\\
		&\leq \frac{2n}{c}\sum_{m=2}^{2/\e}\bfrac{2ec}{3m}^{3m}\sum_{D\geq (1+\th^{1/3})\D}\binom{D-1}{m-1}n^{-D+O(\th m)}\\
		&\leq \frac{2n}{c}\sum_{m=2}^{2/\e}\bfrac{2ec}{3m}^{3m}\bfrac{(1+\th^{1/2})\D}{m}^mn^{-(1+\th^{1/3}-O(\th^{1/2}))}\\
		&\leq n^{1+o(\th^{1/2})-(1+\th^{1/3}-O(\th^{1/2}))}=o(1).
	\end{align*}
\end{proof}

\subsection{Proof of Lemma \ref{lem3}}
\begin{proof}
	Let $d(i)$ denote the degree of vertex $i$ in $G_{n,p}$. The expected number of edges in $G_2$ is 
	\[
	\E\brac{\sum_{i=1}^n\frac{d(i)(d(i)+1)}{2}}=\frac{n}2\sum_{j=1}^{n-1}j(j+1)\binom{n-1}{j}p^j(1-p)^{n-1-j}=\frac{c^2(n-1)(n-2)+cn^2}{2n}.
	\]
	To show concentration around the mean, we use the following theorem from Warnke \cite{War}:
	\begin{theorem}\label{warnke}
		Let $X=(X_1,X_2,\ldots,X_N)$ be a family of independent random variables with $X_k$ taking values in a set $\Lambda_k$. Let $\Omega=\prod_{k\in[N]}\Lambda_k$ and suppose that $\Gamma\subseteq \Omega$ and $f:\Omega\to{\bf R}$ are given. Suppose also that whenever $\bx,\bx'\in \Omega$ differ only in the $k$-th coordinate 
		\[
		|f(\bx)-f(\bx')|\leq \begin{cases}c_k&if\ \bx\in\Gamma.\\d_k&otherwise.\end{cases}
		\]
		If $W=f(X)$, then for all reals $\gamma_k>0$,
		\[
		\Pr(W\geq \E(W)+t)\leq \exp\set{-\frac{t^2}{2\sum_{k\in[N]}(c_k+\gamma_k(d_k-c_k))^2}}+\Pr(X\notin \Gamma)\sum_{k\in [N]}\gamma_k^{-1}.
		\]
	\end{theorem}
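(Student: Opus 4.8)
The plan is to estimate the Laplace transform $\E\, e^{\lambda(W-\E W)}$ by the martingale argument behind McDiarmid's inequality, but to split each increment according to whether the current partial configuration is typical (lies in $\Gamma$) or not, and to use the weights $\gamma_k$ to charge the atypical case to an \emph{additive} error rather than a multiplicative one. Concretely, I would set $\mathcal F_k=\sigma(X_1,\dots,X_k)$, let $Z_k=\E[f(X)\mid\mathcal F_k]$ be the Doob martingale, so $Z_0=\E W$ and $Z_N=W$, and write $W-\E W=\sum_{k=1}^N(Z_k-Z_{k-1})$. Each increment is an average over the $k$th coordinate of a difference of values of $f$, so $|Z_k-Z_{k-1}|\le d_k$ always, while $|Z_k-Z_{k-1}|\le c_k$ on the $\mathcal F_k$-measurable event that the configurations being compared both lie in $\Gamma$.

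The routine part is the per-increment moment bound: conditionally on $\mathcal F_{k-1}$, a Hoeffding-type estimate should give $\E[e^{\lambda(Z_k-Z_{k-1})}\mid\mathcal F_{k-1}]\le \exp\{\lambda^2 b_k/2\}$, where the per-coordinate proxy $b_k$ degrades smoothly from $c_k^2$ toward $d_k^2$ as the conditional chance of leaving $\Gamma$ grows; the weight $\gamma_k$ enters exactly as the interpolation parameter, yielding a proxy of the form $(c_k+\gamma_k(d_k-c_k))^2$ (the quantity in the statement's denominator, modulo an evident misprint) at the price of an additive term proportional to $\gamma_k^{-1}\Pr(\text{leave }\Gamma\text{ at step }k)$. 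Multiplying these conditional bounds over $k$, taking expectations, collecting the additive errors by a union bound, and finally optimizing over $\lambda$ (the usual choice $\lambda=t/\sum_k b_k$) would produce the claimed two-term bound. An equivalent route I would be equally happy with replaces $f$ by its Lipschitz envelope $\tilde f(x)=\min_y\bigl(f(y)+\sum_k c_k\,\mathbf 1\{x_k\ne y_k\}\bigr)$, which is genuinely $c_k$-Lipschitz in coordinate $k$; applying ordinary McDiarmid to $\tilde f$ gives the Gaussian tail, and one then controls $\E f-\E\tilde f$ and $\Pr(f(X)\ne\tilde f(X))$ in terms of $\Pr(X\notin\Gamma)$ weighted by the $\gamma_k$.

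The hard part will be keeping the atypical contribution additive. Naively splitting $Z_k-Z_{k-1}$ into its typical and atypical parts does not work, because the atypical part is not mean-zero and so cannot simply be discarded; its effect has to be rerouted into the additive error without ever inflating the Gaussian factor by something like $(1+\Pr(\text{bad}))^N$. I expect this to require either a stopping-time device — freeze the martingale the first time the configuration threatens to leave $\Gamma$, and bound the probability that this happens — or the modified-function argument above, together with the observation that enlarging $\Omega\setminus\Gamma$ to the set on which the capped function disagrees with $f$ costs only a factor governed by the $\gamma_k$. Once that mechanism is in place, the conditional Hoeffding bound per increment and the optimization in $\lambda$ are standard.
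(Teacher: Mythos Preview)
The paper does not prove this theorem at all: it is quoted verbatim from Warnke~\cite{War} and used as a black box in the proof of Lemma~\ref{lem3}. There is therefore no ``paper's own proof'' to compare your proposal against.

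For what it is worth, your sketch is a faithful outline of how the result is actually established in~\cite{War}. In particular, the second route you describe --- replacing $f$ by a $c_k$-Lipschitz envelope $\tilde f$, applying ordinary McDiarmid to $\tilde f$, and then controlling both $\Pr(f(X)\ne\tilde f(X))$ and the shift $\E f-\E\tilde f$ in terms of $\Pr(X\notin\Gamma)$ weighted by the $\gamma_k$ --- is exactly Warnke's argument, and it cleanly sidesteps the difficulty you flag about keeping the atypical contribution additive. You are also right that the displayed denominator has a misprint: it should read $(c_k+\gamma_k(d_k-c_k))^2$ rather than $c_k+\gamma_k(d_k-c_k)^2$.
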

	
	We use Theorem \ref{warnke} with $N=n$, defining $X_i=\set{j<i:\set{j,i}\text{ is an edge of }G_{n,p}}$ for $i=1,2,\ldots,n$, letting $f(X) = W =\sum_{i=1}^n\frac{d(i)(d(i)+1)}{2}$ be the total number of edges in $G_{n,p}^2$, and $\G=\set{\D(G_{n,p})\leq \log n}$. The condition of $\bx, \bx'$ differing only in the $k$-th coordinate can be seen as missing an edge. In this case, we can assign $c_k=\log^2n$, $d_k=n^2$ and $\Pr(X\notin\G)\leq (\log n)^{-\tfrac12\log n}$. Then we can take $\g_k=n^{-4}$ for $k\in [n]$ and $t=n^{2/3}$ to complete the proof of Lemma \ref{lem3}.
\end{proof}

\subsection{Proof of Lemma \ref{lem4}}
\begin{proof}
	Let $|S|\leq n/(10ck_0)$. We will prove that w.h.p.:
	
	\begin{enumerate}[(a)]
		\item For $k_0<k_1<k_2\leq \e\D$, for all sets $S\subseteq [n]\setminus V_\e$, 
		\[
		\sum_{k=k_1}^{k_2}a_{k,S}\leq \frac{(1+\e)|S|}{k_1}.
		\]
		\item $a_{k,S}\leq (10c)^{k_0}|S|$ for $k\leq k_0$.
	\end{enumerate}
	
	We have, where  $M_{u,k_1,k_2}=\set{(x_2,\ldots,x_{\e\D}):\sum_{k=k_1}^{k_2}x_k=u}$.
	\begin{align*}
		&\Pr\brac{\exists S,|S|=s\leq n/(10ck_0):\sum_{k=k_1}^{k_2}a_{k,S}\geq t}\\
		&\leq\sum_{s=k_1^{1/2}}^{n/(10ck_0)}\binom{n}{s}\sum_{u\geq t}\sum_{\bx\in M_{u,k_1,k_2}}   \binom{n}{x_{k_1},\ldots,x_{k_2},n-u}\prod_{k=k_1}^{k_2}\brac{\binom{s}{k}\bfrac{c}{n}^{k}}^{x_k}\\
		&\leq \sum_{s=k_1^{1/2}}^{n/(10ck_0)}\bfrac{ne}{s}^s\sum_{u\geq t}\sum_{\bx\in M_{u,k_1,k_2}} \binom{n}{x_{k_1},\ldots,x_{k_2},n-u} \prod_{k=k_1}^{k_2}\bfrac{sec}{k_1n}^{k_1x_k}\\
		&=\sum_{s=k_1^{1/2}}^{n/(10ck_0)}\bfrac{ne}{s}^s\sum_{u\geq t}\bfrac{sec}{k_1n}^{k_1u}\sum_{\bx\in M_{u,k_1,k_2}}\binom{n}{x_{k_1},\ldots,x_{k_2},n-u}\\
		&\leq \sum_{s=k_1^{1/2}}^{n/(10ck_0)}\bfrac{ne}{s}^s\sum_{u\geq t}\bfrac{sec}{k_1n}^{k_1u}\binom{n}{u}.
	\end{align*}
	
	Putting $t=(1+\e)s/k_1$, we have, for large $k_1$ i.e. for $k_1>2/\e$,
	\begin{align*}
		&\Pr\brac{\exists S,|S|=s\leq n/(10ck_0):\sum_{k=k_1}^{k_2}a_{k,S}\geq \frac{(1+\e)s}{k_1}}\\
		&\leq\sum_{s=k_1^{1/2}}^{n/(10ck_0)}\bfrac{ne}{s}^s\sum_{u\geq t}\bfrac{sec}{k_1n}^{k_1u}\binom{n}{u}\\
		&\leq 2\sum_{s=k_1^{1/2}}^{n/(10ck_0)}\bfrac{ne}{s}^s\bfrac{sec}{k_1n}^{(1+\e)s}\binom{n}{(1+\e)s/k_1}\\
		&\leq 2\sum_{s=k_1^{1/2}}^{n/(10ck_0)}\bfrac{ne}{s}^s\bfrac{sec}{k_1n}^{(1+\e)s}\bfrac{nek_1}{(1+\e)s}^{(1+\e)s/k_1}\\
		&=2\sum_{s=k_1^{1/2}}^{n/(10ck_0)}\brac{\bfrac{s}{n}^{\e-(1+\e)/k_1}\frac{e^{2+\e+(1+\e)/k_1}c^{1+\e}}{k_1^{(1+\e)(k_1-1)/k_1}}}^s
		=o(n^{-2}),
	\end{align*}
verifying (a).	

	We also have
	\begin{align}
		\Pr(\exists S,|S|=s\leq n/(10ck_0):a_{k,S}\geq t)&\leq \binom{n}{s}\binom{n}{t}\brac{\binom{s}{k}\bfrac{c}{n}^{k}}^{t}\nn\\
		&\leq \bfrac{ne}{s}^s\bfrac{ne}{t}^t\cdot \brac{\frac{se}{k}\cdot\frac{c}{n}}^{kt}.\label{eq0}
	\end{align}
	
	We put $t=(10c)^{k_0}s$ for $2\leq k<k_0$  . Then we have, where $L=(10c)^{k_0}$,
	
	\begin{align}
		\Pr(\exists S,|S|=s\leq n/(10ck_0):2\leq k\leq k_0,a_{k,S}\geq  Ls)&\leq \bfrac{ne}{s}^s\cdot\bfrac{ne}{Ls}^{Ls}\cdot \bfrac{sec}{kn}^{Lks}\nn\\
		&=\brac{\bfrac{s}{n}^{L(k-1)-1}\cdot e\cdot\bfrac{e}{L}^L\cdot\bfrac{ec}{k}^{Lk}}^s\nn\\
		&\leq \brac{\bfrac{s}{n}^{L(k-1)-1}\cdot e^{cL+2}}^s.\label{b1}
	\end{align}
	
	If $k\geq 3$ then the bracketed term $\s_s$ in \eqref{b1} is at most $\bfrac{se^c}{n}^L=o(1)$ and so $\sum_{s\geq 1}\s_s^s=o(1)$. If $k=2$ we write $\s_s=\bfrac{se^c}{n}^{L-1}\cdot e^{c+2}=o(1)$, as well. This verifies (b).
	
	So, by dividing $[1,\e\D]$ into intervals of size $\e\D/2^i$ for $i\geq1$, we get that for all $|S|\leq n/(10ck_0)$,	
	\begin{align}
		A_1(S)
		& \leq \sum_{k=2}^{k_0}(10c)^{k_0}k^2s+\sum_{i\geq 1}\frac{(1+\e)s}{\e\D/2^i}\cdot\frac{(\e\D)^2}{2^{2i-2}}\nn\\
		& \leq (2/\e)^{3}(10c)^{2/\e}s+4(1+\e)\e\D s \\ 
		& \leq 5\e\D s.\label{eq2a}
	\end{align}
\end{proof} 

\subsection{Proof of Lemma \ref{lem5}}
\begin{proof}
	The number of such paths is equal to $\sum_{v\in S}\frac{d_S(v)(d_S(v)+1)}{2}$ where $d_S(v)$ is the degree of $v$ in $S$. 
	\beq{eq3}{
		A_2(S)=\sum_{v\in S,d(v)\leq\e\D}\frac{d_S(v)(d_S(v)+1)}{2}\leq \e\D\sum_{v\in S}\frac{d_S(v)+1}2=\e\D(e(S)+|S|/2),
	}
	where $e(S)$ is the number of $G_{n,p}$ edges entirely contained in $S$. Now
	\mults{
		\Pr(\exists S,s=|S|\leq n/(10ck_0):e(S)\geq 2cs)\leq\sum_{s=2}^{n/(10ck_0)} \binom{n}{s}\binom{\binom{s}{2}}{2cs}\bfrac{c}{n}^{2s}\leq \sum_{s=2}^{n/(10ck_0)}\brac{\frac{ne}{s}\cdot \bfrac{se}{4n}^{2}}^s\\
		=\sum_{s=2}^{n/(10ck_0)}\brac{\frac{s}{n}\cdot\frac{e^3}{16}}^s=o(n^{-1}).
	}
	So,
	\[
	A_2(S)\leq \e\D(2c+1/2) |S|\text{ for all }|S|\leq n/(10ck_0),\  w.h.p.
	\]
\end{proof}

\section{Conclusions}\label{4}
While we have shown that $\chi_\ell(G_2)\sim \D$ w.h.p., it is possible that $\chi(G_2)= \D+1$ w.h.p. This would be quite pleasing, but we are not confident enough to make this a conjecture. It is of course interesting to further consider $\chi(G_2)$ or $\chi_\ell(G_2)$ when $np\to\infty$. Note that when $np\gg n^{1/2}$, the diameter of $G_{n,p}$ is equal to 2 w.h.p., in which case $G_2=K_n$. One can also consider higher powers of $G_{n,p}$ as was done in \cite{AF} and \cite{KLMP}. Such considerations are more technically challenging, especially with list coloring.


\begin{thebibliography}{99}
\bibitem{AN} D. Achlioptas and A. Naor, The two possible values of the chromatic number of a random graph, {\em Annals of Mathematics} 162 (2005) 1335-1351.
\bibitem{AKS} N. Alon, M. krivelevich and B. Sudakov, List coloring of random and pseudo-random graphs, {\em Combinatorica} 19 (1999) 453-472.
\bibitem{CV}  A. Coja--Oghlan and D. Vilenchik, Chasing the $k$-colorability threshold, Proceedings of the 54th IEEE Annual Symposium on Foundations of Computer Science (2013).
\bibitem{AF} A.M. Frieze and G. Atkinson, On the $b$-independence number of sparse random graphs, {\em Combinatorics, Probability and Computing} 13 (2003), 295-310.
\bibitem{FK} A.M. Frieze and M. Karo\'nski, Introduction to Random Graphs, Cambridge University Press (2015).
\bibitem{KLMP} K. Garapaty, D. Lokshtanov, H. Maji and A. Pothen, The Chromatic Number of Squares Of Random Graphs, {\em Journal of Combinatorics} 14 (2023) 507-537.
\bibitem{L} T. {\L}uczak, The chromatic number of random graphs,
{\em Combinatorica} 11 (1991) 45-54.
\bibitem{War} L. Warnke, On the Method of Typical Bounded Differences, {\em Combinatorics, Probability and Computing} 25 (2016) 269–299.
\end{thebibliography}
\end{document}